\newif\ifcolorcomments
\newcommand{\allowcomments}[4]{
\newcommand{#1}[1]{\ifdraft{\ifcolorcomments{\textcolor{#4}{##1 --#3}}\else{\textsl{ ##1 \ --#3}}\fi}\else{}\fi}
}
\allowcomments{\commumtaz}{MH}{Mumtaz}{green}
\allowcomments{\comjohannes}{JS}{Johannes}{blue}
\allowcomments{\comdavid}{DS}{DS}{magenta}
\newtheorem{theorem}{Theorem}[section]
\newtheorem{lemma}[theorem]{Lemma}
\newtheorem{proposition}[theorem]{Proposition}
\newtheorem{corollary}[theorem]{Corollary}
\theoremstyle{definition}
\newtheorem{definition}[theorem]{Definition}
\newtheorem{remark}{Remark}
\newtheorem{example}[theorem]{Example}
\newtheorem{conjecture}{Conjecture}
\newtheorem{problem}{Problem}
\newcommand{\DD}{\mathcal D}
\newcommand{\HH}{\mathcal H}
\newcommand{\MM}{\mathcal M}
\newcommand{\bfP}{\mathbf P}
\newcommand{\Q}{\mathbb Q}
\newcommand{\R}{\mathbb R}
\newcommand{\Z}{\mathbb Z}
\newcommand{\mbf}{\mathbf}
\newcommand{\0}{\mbf 0}
\newcommand{\ee}{\mbf e}
\newcommand{\qq}{\mbf q}
\newcommand{\rr}{\mbf r}
\newcommand{\vv}{\mbf v}
\newcommand{\xx}{\mbf x}
\newcommand{\yy}{\mbf y}
\newcommand{\zz}{\mbf z}
\renewcommand{\text}{\textup}
\newcommand{\NPC}[1]{\ignorespaces}
\newif\ifdraft\drafttrue
\newcommand{\WWW}{W_{n}^{\theta}(\Psi)}
\newcommand{\DDD}{\DD_n^{\theta}(\Psi)}
\newtheorem*{GBSP1}{Generalised Baker-Schmidt Problem for Hausdorff Measure: dual setting}
\begin{document}

\title[The GBSP for dual approximation]{The Baker-Schmidt problem for dual approximation and some classes of functions}
\author{Mumtaz ~Hussain}
\address{Mumtaz Hussain, Department of Mathematical and Physical Sciences, La Trobe University,  Bendigo 3552, Australia. }
\email{m.hussain@latrobe.edu.au}

\author{Johannes Schleischitz}
\address{Middle East Technical University, Northern Cyprus Campus, Kalkanli, G\"uzelyurt}
\email{johannes@metu.edu.tr;  jschleischitz@outlook.com}

\begin{abstract} 
The Generalised Baker-Schmidt Problem (1970) concerns the Hausdorff $f$-measure of the set of $\Psi$-approximable points on a nondegenerate manifold. 
We refine and extend our previous work  [Int. Math. Res. Not. IMRN 2021, no. 12, 8845--8867] in which we settled the problem 
(for dual approximation) for hypersurfaces. We verify the GBSP for certain classes of nondegenerate submanifolds of 
codimension greater than $1$. Concretely, for codimension two or three, we provide examples of manifolds
where the dependent variables can be chosen as quadratic forms.
Our method requires the manifold
to have even dimension at least the minimum of four and half the dimension of the ambient space. We conjecture that these restrictions on the dimension of the manifold are sufficient to provide similar examples in general.

%
\end{abstract}

\maketitle

\section{Dual Diophantine approximation on manifolds}
Let $n\geq 1$ be a fixed integer, $\qq:=(q_1,\ldots, q_n)\in \Z^n$,  and $\xx=(x_1,\dots,x_n)\in\R^n$. 
Let $\Psi:\Z^n\to[0, \infty)$ be a \emph{multivariable approximating function}, that is,  $\Psi$ has the property that
$\Psi(\qq) \rightarrow~0 \text{ as } \|\qq\|:=\max(|q_1|, \ldots, |q_n|) \rightarrow~\infty.$ Let $\theta$ be an arbitrary real number\footnote{
We remark that, in fact, our results
still apply for any sufficiently 
smooth function $\theta(\xx)$, as in~\cite{HSS}, see Remark~\ref{fehlt} below.}. Consider the set
\begin{equation*}
  \DDD:=\left\{\xx\in\R^n:\begin{array}{l}
  |\qq\cdot\xx+p+\theta|<\Psi(\qq)  \  \text{for} \  {\rm i.m.} \ (p, \qq)\in\Z\times \Z^{n}
                           \end{array}
\right\},
\end{equation*}
where  `i.m.' stands for `infinitely many'.  A vector $\xx \in \R^n$ will be called \emph{$(\Psi, \theta)$-approximable} if it lies in the set $\DDD$. When $\theta= 0$, the problem reduces to the \emph{homogeneous} setting.
We are interested in the `size' of the set $\DDD$ with respect to the $f$-dimensional Hausdorff measure $\HH^f$ for some dimension function $f$. By a dimension function $f$ we mean an increasing continuous function $f:\mathbb{R}\to \mathbb{R}$ with $f(0)=0$. 
%
%
%
%

Diophantine approximation on manifolds concerns the study of approximation properties of points in $\R^n$ which are functionally related or in other words restricted to a submanifold $\MM$ of $\R^n$. To estimate the size of sets of points $\xx\in\R^n$ which lie on a $k$-dimensional, nondegenerate\footnote{In this context `nondegenerate' means suitably curved, see \cite{Beresnevich3,KleinbockMargulis2} for precise formulations}, analytic submanifold $\MM\subseteq \R^n$ is an intricate and challenging problem. The fundamental aim is to estimate the size of the set $\MM\cap\DDD$  in terms of Lebesgue measure, Hausdorff measure,  and Hausdorff dimension. When asking such questions it is natural to phrase them in terms of a suitable measure supported on the manifold, since when $k<n$ the $n$-dimensional Lebesgue measure is zero irrespective of the approximating functions. For this reason,  results in the dependent Lebesgue theory (for example, Khintchine-Groshev type theorems for manifolds) are posed in terms of the $k$-dimensional 
Lebesgue measure (equivalent to the Hausdorff measure) on $\MM$.


In full generality, a complete Hausdorff measure treatment for manifolds $\MM$  represents a deep open problem referred to as the Generalised Baker-Schmidt Problem (GBSP) inspired by the pioneering work of Baker and Schmidt \cite{BakerSchmidt}.   There are two variants of this problem, concerning simultaneous and dual approximation. In this paper we are concerned with the dual approximation only.   Ideally one would want to solve the following problem in full generality.

\begin{GBSP1} Let $\MM$ be a nondegenerate submanifold of $\R^n$ with $\dim\MM=k$ and $n \geq 2$. Let $\Psi$ be a multivariable approximating function.  Let $f$ be a dimension function such that $r^{-k}f(r)\to\infty$ as $r\to 0.$ Assume that $r\mapsto r^{-k}f(r)$ is decreasing and $r\mapsto r^{1-k}f(r)$ is increasing. Prove that 

\begin{equation*}
  \HH^f( \DDD\cap\MM)=\left\{\begin{array}{cl}
 0 &  {\rm if } \quad\sum\limits_{\qq\in\Z^n\setminus \{\0\}}\|\qq\|^k\Psi(\qq)^{1-k }f\left(\frac{\Psi(\qq)}{\|\qq\|}\right)< \infty;\\[3ex]
 \infty &  {\rm if } \quad \sum\limits_{\qq\in\Z^n\setminus \{\0\}}\|\qq\|^k\Psi(\qq)^{1-k}f\left(\frac{\Psi(\qq)}{\|\qq\|}\right)=\infty.
                                     \end{array}\right.
\end{equation*}
\end{GBSP1}
Note that $\HH^f$ is proportional to the standard Lebesgue measure when $f(r)=r^n$. In fact, the GBSP is stated in the most idealistic format and solving it in this form is extremely challenging. The main difficulties lie  in the convergence case and therein constructing a suitable nice cover for the set $ \DDD\cap\MM$. 
Recently (2021), the authors with David Simmons settled the GBSP for hypersurfaces for both homogeneous and inhomogeneous settings with non-monotonic multivariable approximating functions \cite{HSS}. We also proved several results for the one-dimensional manifolds such as nondegenerate planar curves or for Veronese curves in \cite{HSS2} under some regularity conditions on the dimension function.  In this paper we refine our framework set out in \cite{HSS} and extend those results to certain classes of nondegenerate manifolds of higher codimension.

We refer the reader to \cite[Subsection 1.1]{HSS} for a description of historical progression towards the GBSP.  Specifically, we refer the reader to \cite{BHH, BakerSchmidt, Huang, Huang2, Hussain, HSS2, KleinbockMargulis2}. For the recent developments of the GBSP for the simultaneous approximation, we refer the reader to \cite[Remark 1.1]{HSS} or \cite{Huang}.

\subsection{Setup and main result}  \label{sandr}

 We first state some regularity conditions on the manifold $\MM$ and
the dimension function $f$. Let $n\ge 2$ be an integer and $\MM$ be a manifold of codimension $l\ge 1$ in $\R^{n}$. 
Assume more precisely that $\MM$ is a surface of a $C^2$-map $g:U\to \R^{l}$, where $U\subset\R^{n-l}$ 
is a connected bounded open set. We denote this as $\MM = \Gamma(g)$.
By default we will assume that vectors are line vectors,
and use superscript $t$ for their transpose.
 Let $\MM$ be parametrised by
\begin{equation} \label{eq:via}
\MM= \{ (x_{1},\ldots, x_{n-l}, g_{1}(\xx),\ldots,
g_{l}(\xx)): \;
\xx=(x_{1},\ldots,x_{n-l})\in U\subseteq \R^{n-l}\}.
\end{equation}
Our conditions read as follows:

\begin{itemize}
\item[(I)] Let $f$ be a dimension function satisfying
 \begin{equation}
\label{fhypothesis}
f(xy) \lesssim x^s f(y), \qquad \text{ for all } y < 1 < x
\end{equation}
for some $s < 2(n-l-1)$.
\item[(II)] 
The real symmetric $(n-l)\times (n-l)$ matrix $\Lambda=\Lambda(g, \boldsymbol{s},\xx)$ with entries
\[
\Lambda_{j,i}=  \sum_{u=1}^{l} s_u\cdot \frac{\partial^2g_u}{\partial x_i\partial x_j}(\xx), \qquad 1\le i,j\le n-l,
\]
for $g$ as above is regular for any choice of real $\boldsymbol{s}=(s_1, s_2,\ldots, s_l)\ne \0$ and all $\xx\in U\setminus S_{\MM}$ outside a set $S_{\MM}$ of $f$-measure $0$.
\end{itemize}

While (II) turns out to be rather complicated,
we point out that (I) holds as soon as the manifold has dimension at least three, as in~\cite{HSS}. Section~\ref{iandii} of this paper is reserved for a more detailed discussion on the conditions, followed
by examples of manifolds satisfying the hypotheses in Section~\ref{examples}. 
Our new criterion for the convergence part of the GBSP, proved in Section~\ref{proof}, reads as follows.


\begin{theorem}\label{thm1} Let $\Psi$ be a multivariable approximating function. Let $f$ be a dimension function satisfying \text{(I)} and let $g: \R^{n-l}\to \R^{l}$ be a $C^2$-function satisfying \text{(II)}. Then if $\MM$ is given
	via $g$ by \eqref{eq:via}, then
	\begin{equation*}\label{fw}
	\HH^f(\DDD\cap\MM)=0
	\end{equation*}
	if the series
	\begin{equation*}\label{eqcon}
	\sum\limits_{\qq\in\Z^n\setminus \{\0\}}\|\qq\|^{n-l}\Psi(\qq)^{l+1-n}f\left(\frac{\Psi(\qq)}{\|\qq\|}\right)
	\end{equation*}
	converges.
	
\end{theorem}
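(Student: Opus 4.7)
The strategy is the standard Hausdorff--Cantelli argument for sublevel sets of $C^2$ functions with nondegenerate Hessian, extending the hypersurface argument of \cite{HSS} to codimension $l$. Parametrise $\MM$ by $\yy \mapsto (\yy, g(\yy))$ for $\yy$ in the bounded open set $U \subseteq \R^{n-l}$, decompose a generic $\qq \in \Z^n \setminus \{\0\}$ as $\qq = (\qq_1, \qq_2) \in \Z^{n-l} \times \Z^l$, and set
\[
F_{p,\qq}(\yy) := \qq_1 \cdot \yy + \qq_2 \cdot g(\yy) + p + \theta, \qquad R_{p,\qq} := \{\yy \in U : |F_{p,\qq}(\yy)| < \Psi(\qq)\}.
\]
Then, after discarding the $\HH^f$-null set $S_\MM$ from hypothesis (II), the pullback of $\DDD \cap \MM$ to $U$ is $\limsup_{(p,\qq)} R_{p,\qq}$; since the boundedness of $U$ restricts $p$ to $O(\|\qq\|)$ values for each $\qq$, it suffices by Hausdorff--Cantelli to prove
\[
\sum_{\qq \ne \0}\, \sum_{p} \HH^f_\infty(R_{p,\qq}) \;\lesssim\; \sum_{\qq \ne \0} \|\qq\|^{n-l}\, \Psi(\qq)^{l+1-n}\, f\!\left(\frac{\Psi(\qq)}{\|\qq\|}\right).
\]

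Fixing $(p, \qq)$, I would split $U$ according to the size of $|\nabla F_{p,\qq}|$ via a threshold $T \asymp (\|\qq\|\Psi(\qq))^{1/2}$. On the \emph{regular part} $\{|\nabla F| \geq T\}$, the implicit function theorem presents $\{F = 0\}$ locally as a smooth graph, and $R_{p,\qq}$ is contained in a tubular neighbourhood of that graph of width $\Psi(\qq)/|\nabla F|$; dyadically decomposing in $|\nabla F|$ and covering each band by balls of radius comparable to the local tube width (optimising at $\rho \asymp \Psi(\qq)/\|\qq\|$) produces an $\HH^f$-bound of the desired order $\|\qq\|^{n-l-1}\,\Psi(\qq)^{l+1-n}\, f(\Psi(\qq)/\|\qq\|)$. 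The more delicate \emph{critical part} $\{|\nabla F| < T\}$ is where hypothesis (II) is essential: the Hessian $D^2 F_{p,\qq} = \Lambda(g, \qq_2, \cdot)$ is nonsingular off $S_\MM$, and by its homogeneity in $\qq_2$ together with a compactness argument on $U$, its eigenvalues are comparable to $\|\qq_2\|$, hence to $\|\qq\|$ on the relevant support. A Morse normal-form change of coordinates centred at each critical point then rewrites $F_{p,\qq}$ locally as $F_{p,\qq}(\yy_0) + Q(\zz)$ for a nondegenerate quadratic form $Q$ with eigenvalues of order $\|\qq\|$, and the critical part of $R_{p,\qq}$ is covered by boundedly many balls of radius $r \asymp (\Psi(\qq)/\|\qq\|)^{1/2}$.

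To fit this contribution into the tube bound I would invoke the growth condition (I): writing $r = (\Psi(\qq)/\|\qq\|)^{1/2}$ and $\rho = \Psi(\qq)/\|\qq\|$, the inequality $f(r) \lesssim (r/\rho)^s f(\rho)$ together with $s < 2(n-l-1)$ gives $f(r) \lesssim (\|\qq\|/\Psi(\qq))^{s/2} f(\rho) \lesssim \|\qq\|^{n-l-1}\Psi(\qq)^{l+1-n} f(\rho)$, exactly matching the regular contribution. Summing over the $O(\|\qq\|)$ admissible $p$ and then over $\qq$ recovers the series of the theorem, which is assumed to converge. The principal obstacle is the critical-region estimate: making the Morse normal form quantitative with constants uniform in $(p, \qq_2, \yy_0)$ (off the negligible set $S_\MM$), controlling the number of critical points of $F_{p,\qq}$ uniformly in $\qq$, and confirming that the condition $s < 2(n-l-1)$ is precisely what allows the quadratic-form scale $(\Psi/\|\qq\|)^{1/2}$ to be absorbed into the tube scale $\Psi/\|\qq\|$ via the doubling inequality (I).
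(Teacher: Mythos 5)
Your plan reproduces, in expanded form, exactly the strategy the paper inherits from \cite{HSS}: normalise $\qq$, split the parameter domain into a regular region covered by tubes (the paper's Lemma~\ref{lemmacover}) and a critical region controlled by the nondegenerate Hessian (the paper's Claim, giving $\|\nabla h(\xx)\|\asymp\|\xx-\vv\|$), then balance the two contributions using the doubling inequality (I) to arrive at the threshold $s<2(n-l-1)$. The one genuinely new ingredient in the paper relative to \cite{HSS} --- that $\nabla^2 h=\Lambda$ now depends on the direction $\boldsymbol{s}=\qq_2/\|\qq_2\|$, so the implied constants must be made uniform over the compact unit sphere $\{\|\boldsymbol{s}\|=1\}\subset\R^l$ (not merely over $U$) --- is the point you list as the ``principal obstacle''; it is resolved precisely by that sphere-compactness argument, so your plan is on the right track and matches the paper's route.
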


\begin{remark}  \label{fehlt}
	It is possible to formulate a variant of Theorem~\ref{thm1}
	with a functional smooth inhomogenity $\theta(\xx)$ as in~\cite{HSS}.
\end{remark}

\subsection{Corollaries (from a combination with the divergence results)
}\label{sectionfibering} 

In this section, we detail some of the corollaries of our theorem along with some of the consequences. We begin by summarising the notation used.
\smallskip

\noindent{\bf Notation.} In the case where the dimension function is of the form $f(r):=r^s$ for some $s < k$, $\HH^f$ is simply denoted as $\HH^s$. On occasions we will consider functions of the form $\Psi(\qq)=\psi(\|\qq\|)$, and in this case we use $W_n^\theta(\psi)$ as a shorthand for $W_n^\theta(\Psi)$. The function $\psi:\R_{>0} \to \R_{>0}$ is called a \emph{single-variable approximating function}. 
$B_n(\xx, r)$ denotes a ball centred at the point $\xx\in\mathbb{R}^n$ of radius $r$.
For real quantities $A,B$ and a parameter $t$, we write $A \lesssim_t B$ if $A \leq c(t) B$ for a constant $c(t) > 0$ that depends on $t$ only (while $A$ and $B$ may depend on other parameters). 
We write  $A\asymp_{t} B$ if $A\lesssim_{t} B\lesssim_{t} A$.
If the constant $c>0$ depends only on parameters that are constant throughout a proof, we simply write $A\lesssim B$ and $B\asymp A$.

\smallskip

The divergence part of the GBSP was proved by Badziahin-Beresnevich-Velani \cite{BBV} for the $s$-dimensional Hausdorff measure\footnote{It is to be noted that the techniques used in proving \cite[Theorem 2]{BBV}  also work for the  $f$-dimensional Hausdorff measure situation. } and for multivariable approximating functions satisfying a certain property $\bfP$ for any nondegenerate manifolds. Following the terminology of \cite{BBV}, we say that an approximating function $\Psi$ satisfies property $\bfP$ if it is of the form
$\Psi(\qq)=\psi(\|\qq\|_{\vv})$ for a monotonically decreasing
function $\psi: \mathbb{R}_{>0}\to \mathbb{R}_{>0}$ 
(single-variable approximation function), 
$\vv=(v_{1},\ldots,v_n)$ with $v_i > 0$ and $\sum_{1\leq i\leq n} v_{i}=n$,
and $\| \cdot \|_{\vv}$ defined as
the quasi-norm $\|\qq\|_{\vv}= \max_i \vert q_{i}\vert^{1/v_{i}}$.
When combined with Theorem~\ref{thm1} we obtain the following implication on the GBSP problem for nondegenerate (see~\cite{BBV}) manifolds for dimensions not less than $3$.

\begin{corollary}\label{HSS:cor2}
	Let $\Psi$
	be a decreasing 
	multivariable approximating
	function satisfying property $\bfP$. Let $f$ be a dimension function satisfying \text{(I)} and let $g$ be a $C^2$-function satisfying \text{(II)}. Let $\MM$
	be a nondegenerate manifold in $\mathbb{R}^{n}$ 
	of dimension $n-l$, given
	via $g$ by \eqref{eq:via}. Then
	
	\begin{equation*}
	\HH^f( \DDD\cap \MM)= \begin{cases}
	0 &  {\rm if } \quad \sum\limits_{\qq\in\Z^n\setminus \{\0\}}\|\qq\|^{n-l}\Psi(\qq)^{l+1-n}f\left(\frac{\Psi(\qq)}{\|\qq\|}\right)< \infty;\\[3ex]
	\infty &  {\rm if } \quad \sum\limits_{\qq\in\Z^n\setminus \{\0\}}\|\qq\|^{n-l}\Psi(\qq)^{l+1-n}f\left(\frac{\Psi(\qq)}{\|\qq\|}\right)=\infty.
	\end{cases}
	\end{equation*}
\end{corollary}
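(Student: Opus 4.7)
The plan is to obtain Corollary~\ref{HSS:cor2} as a direct combination of two results: Theorem~\ref{thm1} above for the convergence half, and the divergence theorem of Badziahin--Beresnevich--Velani \cite[Theorem 2]{BBV} (in its $f$-Hausdorff measure variant, as alluded to in the footnote) for the divergence half. In other words, no new geometric or analytic input is needed beyond what has already been set up; the task is to check that the hypotheses of the two source results match those of the corollary, and that the weighted series appearing in each source matches the one in the conclusion.

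For the convergence case I would simply apply Theorem~\ref{thm1}. The assumptions of the corollary on $\Psi$, $f$, and $g$ (namely $\Psi$ a multivariable approximating function with property $\bfP$, $f$ satisfying condition (I), and $g$ a $C^{2}$-function satisfying condition (II)) are at least as strong as those required by Theorem~\ref{thm1}, since property $\bfP$ and monotonicity are additional restrictions on $\Psi$ that Theorem~\ref{thm1} does not need. Thus Theorem~\ref{thm1} applies verbatim and gives $\HH^{f}(\DDD\cap\MM)=0$ whenever the indicated series converges.

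For the divergence case I would invoke the BBV framework. The corollary supplies exactly the hypotheses needed: $\MM$ is assumed to be a nondegenerate submanifold of $\R^{n}$ of dimension $n-l$, and $\Psi$ is a decreasing multivariable approximating function satisfying property $\bfP$. In its original formulation \cite[Theorem 2]{BBV} gives $\HH^{s}(\DDD\cap\MM)=\infty$ whenever the $s$-dimensional Hausdorff-weighted series diverges; as the footnote in the paper points out, the same argument extends to a general dimension function $f$ subject to a moderate scaling condition such as \eqref{fhypothesis}, and in that extension the critical series is precisely $\sum_{\qq\ne\0}\|\qq\|^{n-l}\Psi(\qq)^{l+1-n}f(\Psi(\qq)/\|\qq\|)$, which is exactly the series in the corollary (one recovers the BBV series by taking $f(r)=r^{s}$).

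The main (and essentially only) obstacle is the bookkeeping step of confirming that the BBV divergence proof transfers cleanly to the $f$-measure setting under condition (I), and that their series statement agrees term-for-term with the one in the corollary. Once this is verified, Corollary~\ref{HSS:cor2} is obtained by juxtaposing the convergence conclusion from Theorem~\ref{thm1} and the divergence conclusion from the $f$-Hausdorff variant of \cite[Theorem 2]{BBV} to produce the stated zero--infinity dichotomy.
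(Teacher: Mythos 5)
Your proposal matches the paper's own reasoning exactly: the convergence half is Theorem~\ref{thm1} applied verbatim, and the divergence half is the $f$-Hausdorff extension of \cite[Theorem 2]{BBV} for nondegenerate manifolds and property-$\mathbf{P}$ functions, as the paper states in the paragraph preceding the corollary. Nothing further is needed.
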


We emphasise again that only the divergence case, treated in~\cite{BBV}, 
assumes monotonicity on the approximating function $\Psi$.  

\begin{corollary}\label{maincor}
	Let $\MM$ and $\Psi$ be as in Corollary~\ref{HSS:cor2}
	and let $s$ be a real number satisfying $s<2(n-l-1)$. Then
	\begin{equation*}
	\HH^s( \DDD\cap \MM)= \begin{cases}
	0 &  {\rm if } \quad \sum\limits_{\qq\in\Z^n\setminus \{\0\}}\|\qq\|\left(\frac{\Psi(\qq)}{\|\qq\|}\right)^{s+l+1-n}< \infty;\\[3ex]
	\infty &  {\rm if } \quad \sum\limits_{\qq\in\Z^n\setminus \{\0\}}\|\qq\|\left(\frac{\Psi(\qq)}{\|\qq\|}\right)^{s+l+1-n}=\infty,
	\end{cases}.
	\end{equation*}
\end{corollary}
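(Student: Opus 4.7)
The plan is to obtain Corollary~\ref{maincor} as an immediate specialization of Corollary~\ref{HSS:cor2} to the power-function dimension function $f(r) = r^s$. The first step is to verify that $f(r) = r^s$ satisfies condition~(I) from Section~\ref{sandr}. Since $f(xy) = (xy)^s = x^s y^s = x^s f(y)$ holds with equality, the growth bound \eqref{fhypothesis} is trivial (implicit constant $1$), and the hypothesis $s < 2(n-l-1)$ is exactly the exponent range allowed by (I). Moreover, $\HH^f$ coincides with the usual $s$-dimensional Hausdorff measure $\HH^s$ by the notational convention recalled at the start of Section~\ref{sectionfibering}.

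With (I) verified, Corollary~\ref{HSS:cor2} applies directly and yields a zero-infinity dichotomy for the series
\[
\sum_{\qq \in \Z^n \setminus \{\0\}} \|\qq\|^{n-l}\, \Psi(\qq)^{l+1-n}\, f\!\left(\frac{\Psi(\qq)}{\|\qq\|}\right).
\]
The remaining step is purely algebraic: substituting $f(r) = r^s$ turns each summand into $\|\qq\|^{n-l-s}\Psi(\qq)^{s+l+1-n}$. Pulling one factor of $\|\qq\|$ outside rewrites this as $\|\qq\|\bigl(\Psi(\qq)/\|\qq\|\bigr)^{s+l+1-n}$, which is precisely the series appearing in the statement. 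The match is an honest identity (not merely an asymptotic), so both the convergence and divergence halves transfer verbatim.

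The main, and essentially only, obstacle is exponent bookkeeping: keeping track of the three powers $\|\qq\|^{n-l}$, $\Psi(\qq)^{l+1-n}$, and $(\Psi(\qq)/\|\qq\|)^s$ and confirming that they regroup correctly. No new analytic input is required, since condition~(II) and property~$\bfP$ are inherited unchanged from the hypotheses of Corollary~\ref{HSS:cor2}; in particular, the divergence half comes from~\cite{BBV} via Corollary~\ref{HSS:cor2} with no additional regularity assumptions on $\Psi$ or $\MM$ beyond those already in force.
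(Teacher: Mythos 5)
Your proposal is correct and matches the paper's intended route: Corollary~\ref{maincor} is the specialization of Corollary~\ref{HSS:cor2} to $f(r)=r^s$, for which condition~(I) holds with equality precisely when $s<2(n-l-1)$, and the exponent bookkeeping $\|\qq\|^{n-l}\Psi(\qq)^{l+1-n}(\Psi(\qq)/\|\qq\|)^s=\|\qq\|(\Psi(\qq)/\|\qq\|)^{s+l+1-n}$ is an identity. No gaps.
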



For $l=1, n=2$ where $\MM$ is a planar curve, while the corollary and its proof are formally valid, the involved parameter range for $s$ is empty. We refer the reader to \cite{HSS2} for the GBSP type results on nondegenerate curves such as Veronese curves, i.e. sets of the form $\{(x,x^{2},\ldots,x^n):x\in\R\}$. We remark that
in view of Corollary~\ref{maincor},
Conjecture~\ref{C1} below would imply the GBSP for a large class of manifolds and any $\Psi$ with
property $\mathbf{P}$.

One of the consequences of Corollary \ref{maincor} is the following Hausdorff dimension result. Let $\tau_{\Psi}$ be the lower order at infinity of $1/\Psi$, that is,
\[
\tau_\Psi:=\liminf_{t\to\infty} 
\frac{\log(1/\Psi(t))}{\log t},
\ \  \text{where} \ \  \Psi(t)= \inf_{\xx\in\R^n: \|\xx\|\le t} \Psi(\xx),
\]
and we may assume $\tau_\Psi\ge n> 0$. 
Then from the definition of Hausdorff measure, Theorem \ref{thm1} implies that for any approximating function $\Psi$ with lower order at infinity $\tau_{\Psi}$ and for $\MM$ as above
(that is, with property (II)) of dimension $\dim\MM\geq 3$, we have
\[
\dim_\HH (\WWW\cap\MM)\leq \dim \MM-1+\frac{n+1}{\tau_{\Psi}+1}.
\]
This Hausdorff dimension result was previously only known for either the planar curve \cite{BHH, Hussain, Huang}, Veronese curve \cite{Bernik}, or for the hypersurface \cite{HSS}.

\medskip
\noindent{\bf Acknowledgments.} The research of Mumtaz Hussain is supported by the Australian Research Council Discovery Project (200100994).

\section{On conditions (I) and (II)}  \label{iandii}

\subsection{On Condition (I)} \label{reh}
Note that in the statement of the GBSP, the standard condition on the dimension
function $f$ that $f(q)q^{-(n-l)}$ (recall $n-l=\dim \MM$) is decreasing is assumed. With this condition in hand, the condition (I) is satisfied as soon as 
\[
n-l\geq 3. 
\]
From the aforementioned decay property we see
that $f(xy)\le x^{n-l}f(y)$ for $y<1<x$. Since $n-l<2(n-l-1)$
as soon as $n-l\ge 3$, indeed we infer that \eqref{fhypothesis} holds
in the non-empty parameter range $s\in [n-l, 2(n-l-1))$. On the other hand,
condition (I) excludes all curves (one dimensional manifolds),
as well as all two dimensional manifolds, for
many interesting functions $f$.

\subsection{On Condition (II)}  \label{condii}
Condition (II) is more delicate. It replaces and generalises the non-vanishing of the
determinant of the Hessian $\nabla^2 g\in \mathbb{R}^{(n-1)\times (n-1)}$ condition within $U$ from~\cite{HSS} when $l=1$. 
Fixing $\xx\in U$,
the determinant of $\Lambda$ in (II) becomes a multivariate, homogeneous polynomial $P$ of total degree $n-l$ in the variables $s_1,\ldots,s_l$. Its coefficients are functions in the
$\ell\cdot \binom{n-l+1}{2}$ second order partial derivatives of $g$
(using symmetry of $\Lambda$), evaluated at $\xx\in U$. 
In other words, for (II) we need that some form of degree $n-l$ in $l$ variables is positive (or negative) definite at any point in $U$.
This definiteness problem of forms
seems related to Hilbert's XVII-th problem, whose positive answer in particular implies that $P$ above 
must have a presentation
as a sum of squares. It is evident that the complexity of the problem increases fast
as $l$ and $n$ grow. 

A priori, it is not clear if, generally,  $g$
satisfying hypothesis (II) exists when $l\ge 2$. However,
for certain pairs $(l,n)$ we provide examples below.
It is evident from the form of $\Lambda$ that, provided that examples exists at all,  we can choose the coordinate functions $g_j$ as quadratic forms, defined on the entire space $U=\R^{n-l}$. Moreover, we can perturb any such example by adding any functions with small enough second order derivatives by absolute value uniformly on $\R^{n-l}$ to the $g_j$. For simplicity of presentation, we introduce the following notion.

\begin{definition}
	Call a pair of integers $(l,n)$ with $n>l\ge 1$ a {\em good pair}
	if (II) holds for some $g:\R^{n-l}\to \R^l$ as in the introduction.
\end{definition}

By the above observation, the induced $g$ of any good pair can be 
taken a quadratic form defined globally. It is obvious that
any pair $(1,n)$ is good. For larger $l$,
the following general going-up and going-down 
properties for good pairs are straightforward.

\begin{proposition}  \label{prop}
	If $(l,n)$ is a good pair, then so is 
	\begin{itemize}
		\item[\rm (i)] the pair
		$(l-t,n-t)$ for any integer $0\le t\le l-1$.
		\item[\rm (ii)] the pair $(l,\tilde{n})$ with $\tilde{n}=n+t(n-l)$ 
		for any integer $t\ge 0$.
	\end{itemize}
\end{proposition}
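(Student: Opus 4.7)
The plan is to argue entirely at the level of Hessian matrices, using the observation in Section~\ref{condii} that any good pair can be realised by globally defined quadratic forms $g_u\colon \R^{n-l}\to \R$. With that reduction in hand, each Hessian $A_u:=\bigl(\partial^2 g_u/\partial x_i\partial x_j\bigr)_{i,j}$ is a constant symmetric $(n-l)\times(n-l)$ matrix, the form $\Lambda(\boldsymbol{s})=\sum_{u=1}^{l}s_u A_u$ is independent of $\xx$, and the exceptional set $S_\MM$ in condition~(II) is automatically empty. Condition~(II) therefore becomes the purely linear-algebraic statement that $\det\Lambda(\boldsymbol{s})\ne 0$ for every $\boldsymbol{s}\in\R^l\setminus\{\0\}$; equivalently, the linear span of $\{A_1,\ldots,A_l\}$ consists, apart from the zero matrix, of invertible symmetric matrices. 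I will then exhibit the new pencils certifying parts~(i) and~(ii) directly.

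For part~(i), starting from quadratic forms $g_1,\ldots,g_l$ realising $(l,n)$, I will simply discard the last $t$ coordinate functions and set $\tilde g:=(g_1,\ldots,g_{l-t})\colon\R^{n-l}\to\R^{l-t}$. Viewing $\R^{n-l}$ as $\R^{(n-t)-(l-t)}$, this produces a manifold of codimension $l-t$ in $\R^{n-t}$, which is the parametrisation required for the candidate pair $(l-t,n-t)$. For any $\tilde{\boldsymbol{s}}=(\tilde s_1,\ldots,\tilde s_{l-t})\ne \0$, padding with zeros yields a nonzero $\boldsymbol{s}=(\tilde s_1,\ldots,\tilde s_{l-t},0,\ldots,0)\in\R^l$ with $\sum_{u=1}^{l-t}\tilde s_u A_u=\Lambda(\boldsymbol{s})$, which is invertible by hypothesis. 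Hence~(II) passes to $\tilde g$.

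For part~(ii), put $\tilde n=n+t(n-l)$, so that $\tilde n-l=(t+1)(n-l)$, and split the input variable into $t+1$ consecutive blocks $\yy_0,\ldots,\yy_t$ each of length $n-l$. I will define
\[
\tilde g_u(\yy_0,\ldots,\yy_t)\;=\;\sum_{j=0}^{t}g_u(\yy_j),\qquad u=1,\ldots,l.
\]
A one-line computation of second partials shows that the Hessian $\tilde A_u$ of $\tilde g_u$ is block diagonal with $t+1$ copies of $A_u$ on the diagonal. Hence $\tilde\Lambda(\boldsymbol{s})=\sum_u s_u\tilde A_u$ is itself block diagonal with $t+1$ copies of $\Lambda(\boldsymbol{s})$, so $\det\tilde\Lambda(\boldsymbol{s})=\bigl(\det\Lambda(\boldsymbol{s})\bigr)^{t+1}$, which is nonzero for every $\boldsymbol{s}\ne\0$ by the hypothesis on $(l,n)$.

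The only conceivable obstacle is the reduction to quadratic forms at the outset: I am relying on the authors' observation in Section~\ref{condii} that whenever some $g$ realises $(l,n)$, one can always be chosen whose coordinate functions are quadratic forms globally defined on $\R^{n-l}$. Once that reduction is in place, both parts amount to elementary manipulations of pencils of symmetric matrices (restriction to a subpencil in~(i); tensoring with an identity block to obtain block-diagonal copies in~(ii)), and no analytic subtlety appears.
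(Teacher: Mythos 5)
Your proof is correct and takes essentially the same route as the paper's (very terse) sketch: reduce to globally defined quadratic forms so that condition (II) becomes a statement about the pencil $\Lambda(\boldsymbol{s})=\sum_u s_u A_u$ of constant symmetric matrices, then specialise for part (i) and build block-diagonal Hessians for part (ii). The only variation is in (i), where the paper suggests merging variables by setting $s_1=\cdots=s_{t+1}$ (equivalently, replacing $g_1,\ldots,g_{t+1}$ by their sum) while you drop the last $t$ coordinate functions (equivalently, set $s_{l-t+1}=\cdots=s_l=0$); both are valid specialisations and yours is, if anything, slightly cleaner.
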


Claim (i) can be seen by specialisation of $t$ variables, 
for example via putting $s_1~=~\cdots=~s_{t+1}$.
The proof of (ii) will become apparent from the examples in Section~\ref{examples}. On the other hand, 
it is in general unclear if $(l,n)$ being a good pair 
will
imply the same for $(l,\tilde{n})$ with a general larger $\tilde{n}>n$, even if $\tilde{n}-l$ is even (see Obstruction~1 below).

On the other hand,
condition (II) has some natural limitations, as captured in the following obstructions.

\noindent $\textbf{Obstruction 1:}$  
	If $l\ge 2$ and $n-l$ is odd, then $(l,n)$ is not a good pair, that is, the condition (II) cannot hold. 
	
	While a short proof seems to follow directly from the positive
	answer to Hilbert's XVII'th problem, we want to explicitly
	explain how this special case can be handled.
	First assume $l=2$. Indeed, if 
	either the coefficient of $x_1^{n-l}$ or $x_2^{n-l}$ of the polynomial $P(x_1,x_2)$ defined above vanishes, then we may take $x_1$ arbitrary
	and $x_2=0$ or vice versa, hence we get non-trivial solutions for $\det \Lambda=P(x_1,x_2)=0$.
	If otherwise both coefficients are not $0$, then by choosing any
	$x_2\ne 0$ we get a single-variable polynomial in $x_1$ of odd degree with non-zero constant term,  again inducing a non-trivial solution for $\det \Lambda=P(x_1,x_2)=0$.
	Finally if $l>2$, by specialising $l-2$ variables $x_3=\cdots=x_l=0$, 
	we get a polynomial $Q(x_1,x_2)$ in two variables. 
	Regardless if $Q\equiv 0$ is the constant
	$0$ polynomial or not, by the observations for $l=2$ above, 
	we may choose at least one of $x_1, x_2$ (almost)
	arbitrary for a solution of $\det \Lambda=0$. 
	Thus we again find a non-trivial
	solution for $\det \Lambda=0$ in $s_1,\ldots,s_l$. 

\noindent $\textbf{Obstruction 2:}$
	We require $n\ge 2l$ for $(l,n)$ being a good pair. 
	
	Otherwise if $n-l<l$ then
	we can annihilate a line of $\Lambda$ on a linear subspace
	of dimension at least $1$ of $\boldsymbol{s}\in\R^l$ 
	obtained from intersecting $n-l$
	hyperplanes of $\R^l$, regardless of the choice of $g, \xx$. 

Obstruction~2 means that the manifold must have at least 
half the dimension of the ambient space. We believe that these are the only obstructions.
In view of Theorem~\ref{thm1} and
the observations in Section~\ref{reh} we therefore go on to
state the following conjecture.

\begin{conjecture}  \label{C1}
	The following claims hold
	\begin{itemize}	
		\item[(i)] 
		If $l\ge 2$ and $n\ge 2l$ and $n-l$ is even, then $(l,n)$ is 
		a good pair.
		\item[(ii)] If $l\ge 2$ and $n\ge 2l$ and $n-l\ge 4$ is even, then there
		exists $g$ with coordinate functions $g_j$ quadratic forms such that (I), (II) holds on $\R^{n-l}$,
		hence the convergence part of the GBSP holds for the induced manifolds.
	\end{itemize}
\end{conjecture}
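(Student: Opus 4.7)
The plan is to reduce Conjecture~\ref{C1} to a purely linear-algebraic existence problem for subspaces of real symmetric matrices. With quadratic coordinate forms $g_u(\xx) = \tfrac{1}{2}\xx A_u \xx^t$ and symmetric $A_u \in \R^{(n-l)\times(n-l)}$, the Hessian of $g_u$ is constant and equal to $A_u$, so the matrix $\Lambda$ from condition (II) collapses to $\Lambda(\boldsymbol{s}) = \sum_{u=1}^l s_u A_u$, independently of $\xx$. Both part (i) (existence of a good pair) and part (ii) (quadratic witness with $S_{\MM}=\emptyset$) thus reduce to exhibiting an $l$-dimensional subspace $V = \mathrm{span}(A_1,\ldots,A_l) \subset \mathrm{Sym}(n-l,\R)$ meeting the determinantal hypersurface $\{\det = 0\}$ only at $\0$. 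Condition (I) is automatic for $n-l \ge 3$ by Section~\ref{reh}, which is implied in both parts by $n \ge 2l$, $l \ge 2$, so (I) is not an obstacle.

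The main construction uses Clifford representations. I look for symmetric involutions $A_1,\ldots,A_l$ (so $A_u^2 = I_{n-l}$) that pairwise anti-commute: the cross terms in $\Lambda(\boldsymbol{s})^2$ then cancel and
\[
\Lambda(\boldsymbol{s})^2 \;=\; \sum_{u=1}^l s_u^2\, I_{n-l} \;=\; \lVert \boldsymbol{s}\rVert^2 \, I_{n-l},
\]
so $\Lambda(\boldsymbol{s})$ is invertible whenever $\boldsymbol{s}\ne\0$. Such an $l$-tuple is an orthogonal representation of the Clifford algebra $\mathrm{Cl}(l,0)$ on $\R^{n-l}$; by Hurwitz--Radon theory one exists precisely when $l+1 \le \rho(n-l)$, where $\rho$ is the Hurwitz--Radon function. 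The base cases $(l,n-l) = (2,2)$ and $(3,4)$ are realised concretely by $A_1 = \mathrm{diag}(1,-1)$, $A_2 = \begin{pmatrix} 0 & 1 \\ 1 & 0 \end{pmatrix}$, and by the three Pauli tensors $\sigma_3 \otimes I_2$, $\sigma_1 \otimes \sigma_3$, $\sigma_1 \otimes \sigma_1$ on $\R^4$, respectively; iterated tensor products furnish a witness for each $(l,\, 2^{\lceil \log_2(l+1)\rceil})$.

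I would then propagate these base cases using Proposition~\ref{prop}(ii): given a witness $g^{(0)}$ for $(l,n_0)$, stacking $t+1$ disjoint copies of $g^{(0)}$ in fresh blocks of variables yields a witness for $(l,n_0 + t(n_0 - l))$, since the resulting $\Lambda$ is block-diagonal with determinant the $(t+1)$-st power of the original. Combined with a straightforward direct-sum extension that merges witnesses for $(l,n^{(1)}_0)$ and $(l,n^{(2)}_0)$ into a witness for $(l,\, n^{(1)}_0 + n^{(2)}_0 - l)$, this already settles part (i) for every admissible $n$ when $l = 2$ (all even $n-l \ge 2$), and for every $n$ with $n-l \equiv 0 \pmod 4$ when $l = 3$, which is the full codimension-two range and half of codimension-three.

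The hard part --- and the reason Conjecture~\ref{C1} is phrased as a conjecture rather than a theorem --- is the remaining \emph{gap regime}: admissible pairs $(l,n)$ with $n-l$ even, $n \ge 2l$, but $n-l$ strictly below the Hurwitz--Radon threshold $2^{\lceil \log_2(l+1)\rceil}$. Obstructions~1 and~2 do not exclude these pairs, yet no anti-commuting family of symmetric involutions of the required size exists, so the Clifford method is unavailable. To close them I would abandon strict anti-commutativity and look for genuinely new constructions: algebraic deformations of Clifford pencils that preserve transversality to the discriminant hypersurface $\{\det = 0\} \subset \mathrm{Sym}(n-l,\R)$, pencils built from tensor products of smaller nonsingular pencils, and, at the deepest level, a symmetric counterpart of the Adams--Lax--Phillips bound on the maximal dimension of a totally nonsingular subspace of $M_m(\R)$. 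In the absence of such a sharp classification for the symmetric case --- which is itself a delicate representation-theoretic question --- the conjecture appears genuinely out of reach in full generality, which is precisely why the paper settles for exhibiting specific families in codimension two and three rather than establishing the general statement.
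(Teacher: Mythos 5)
The statement you were asked to prove is a \emph{conjecture}; the paper itself does not prove it, but only verifies it for $l=2$ (all even $n-l$) and for $l=3$ with $n-l$ a positive multiple of~$4$, leaving the remaining cases open. You correctly recognize this, and your proposal is in effect a partial verification in the same spirit rather than a full proof, which is the appropriate response.

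Your route, however, differs in a useful way from the paper's. The paper reduces condition (II) to finding a symmetric matrix $M_{l,n}$ with entries among $0,\pm z_1,\ldots,\pm z_l$ and everywhere-nonvanishing determinant (Lemma~\ref{laura}), and then exhibits explicit matrices $M_{2,4}$ and $M_{3,7}$ found essentially by inspection, with $\det M_{3,7}=(z_1^2+z_2^2)^2+4z_3^4$. Your Clifford-algebra reformulation --- seek pairwise anticommuting symmetric involutions $A_1,\ldots,A_l$, so that $\Lambda(\boldsymbol{s})^2=\lVert\boldsymbol{s}\rVert^2 I$ --- is a structured special case of that lemma that makes invertibility automatic rather than something to be checked by hand. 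Your Pauli-tensor witness for $(3,7)$ is a genuinely different matrix from the paper's (yours has $\det=(s_1^2+s_2^2+s_3^2)^2$), yet the block-diagonal propagation machinery and the resulting verified ranges coincide exactly with the paper's: $l=2$ for all even $n-l$, and $l=3$ for $n-l\equiv 0\bmod 4$. You also correctly identify the residual gap regime, including the first open cases $l=3,n=9$ and $l=4,n=8$, and honestly conclude that a general argument is out of reach --- matching the paper's own assessment. Two minor cautions. First, your remark that condition~(I) is ``implied in both parts by $n\ge 2l$, $l\ge 2$'' is off: those inequalities give only $n-l\ge 2$; for part~(i) condition~(I) is irrelevant (a good pair concerns~(II) only), and for part~(ii) it follows from the explicit hypothesis $n-l\ge 4$. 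Second, the precise Hurwitz--Radon threshold you cite, $l+1\le\rho(n-l)$, for the existence of $l$ pairwise anticommuting symmetric involutions should be double-checked (a standard argument gives $l\le\rho(n-l)$ via $B_i=A_1A_i$), though this does not affect the concrete constructions you actually use.
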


Clearly it would suffice to verify (i), claim (ii) is just stated for completeness.
We will verify the conjecture for $l=2$ in Section~\ref{examples}.
Moreover, for $l=3$ we establish the partial result 
that $n-l$ being a positive multiple of $4$ is sufficient. The simplest cases where Conjecture~\ref{C1} remains open 
are $l=3, n=9$ and $l=4, n=8$.

\section{Examples}  \label{examples}

\subsection{Special case $l=2$}

We start with an example to illustrate condition (II) in the
case $n=4, l=2$. Unfortunately, since
$\dim\MM=2$, condition (I) does not hold in this context, see Section~\ref{reh}.

\begin{example}  \label{Ex1}
Let $l=2$, $n=4$, so that $\MM$ is a two-dimensional manifold
with codimension two. Using multilinearity of the
determinant and after some calculations, we see that the polynomial 
representing $\det \Lambda$ becomes
\[
P(s_1,s_2)= 
s_1^2 A_1(g,\xx) +   s_2^2 A_2(g,\xx) + s_1s_2 A_3(g,\xx)
\]
where
\[
A_1(g,\xx)= \frac{\partial^2g_1}{\partial^2x_1}(\xx)\cdot  \frac{\partial ^2g_1}{\partial^2x_2}(\xx)-(\frac{\partial^2g_1}{\partial x_1\partial x_2}(\xx))^2,  
\]
\[
A_2(g,\xx)= \frac{\partial^2g_2}{\partial ^2x_1}(\xx)\cdot  \frac{\partial^2g_2}{\partial^2x_2}(\xx)-(\frac{\partial^2g_2}{\partial x_1\partial x_2}(\xx))^2,  
\]
and
\[
A_3(g,\xx)= \frac{\partial ^2g_1}{\partial ^2x_1}(\xx)\cdot  \frac{\partial ^2g_2}{\partial ^2x_2}(\xx)-2\frac{\partial ^2g_1}{\partial x_1\partial x_2}(\xx)
\cdot \frac{\partial ^2g_2}{\partial x_1\partial x_2}(\xx)
+ \frac{\partial ^2g_2}{\partial ^2x_1}(\xx)\cdot  \frac{\partial ^2g_1}{\partial ^2x_2}(\xx).
\]
By the criterion of minors to test definiteness of a quadratic
form with respect to the corresponding symmetric matrix with rows $(A_1(g,\xx), A_3(g,\xx)/2)$
and $(A_3(g,\xx)/2, A_2(g,\xx))$,
then the criterion 
of condition (II) that $P$ is positive (or negative) definite 
becomes
\begin{equation} \label{eq:inin}
A_3(g,\xx)^2 < 4A_1(g,\xx)A_2(g,\xx).
\end{equation}
A sufficient criterion for second order derivatives is given by
\[
\frac{\partial ^2g_1}{\partial ^2x_1}(\xx)\cdot  \frac{\partial ^2g_1}{\partial ^2x_2}(\xx) =
\frac{\partial ^2g_2}{\partial ^2x_1}(\xx)\cdot  \frac{\partial ^2g_2}{\partial ^2x_2}(\xx)<0,\qquad 
\frac{\partial ^2g_1}{\partial x_1\partial x_2}(\xx)\ne  \frac{\partial ^2g_2}{\partial x_1\partial x_2}(\xx)
\]
as then we may write inequality
\eqref{eq:inin} equivalently as
\[
4\frac{\partial ^2g_2}{\partial ^2x_1}(\xx)\cdot  \frac{\partial ^2g_2}{\partial ^2x_2}(\xx)\cdot 
\left(\frac{\partial ^2g_1}{\partial x_1\partial x_2}(\xx)- \frac{\partial ^2g_2}{\partial x_1\partial x_2}(\xx)\right)^2 <0.
\]
Specialising further, if for example, we can take at some $\xx\in U$ the derivates
\begin{align*}
&\frac{\partial ^2g_1}{\partial ^2x_1}(\xx)=\frac{\partial ^2g_2}{\partial ^2x_1}(\xx) =2, \\ & 
\frac{\partial ^2g_1}{\partial ^2x_2}(\xx)= \frac{\partial ^2g_2}{\partial ^2x_2}(\xx)=-2, \\ & 
\frac{\partial ^2g_1}{\partial x_1\partial x_2}(\xx)\ne  \frac{\partial ^2g_2}{\partial x_1\partial x_2}(\xx),
\end{align*}
then it will be true in some neighbourhood of $\xx$.
For example when $n=4, l=2$ and $\delta\ne 0$, then
\[
g(\xx)=g(x,y)= ( x^2-y^2+\delta xy, x^2-y^2 )
\]
satisfies this for all $(x,y)$ in $\R^2$. The arising 
parametrised manifolds becomes
\begin{equation} \label{eq:doet}
\MM_{\delta}= \{ (x,y, x^2-y^2+\delta xy, x^2-y^2): x,y\in \R  \},  \ {\rm where} \ \delta\ne 0.
\end{equation}
We remark that $\delta\ne 0$ is necessary for the convergence part of GBSP as otherwise the manifold
lies in the rational subspace of $\R^4$ defined by $x_3=x_4$.
\end{example}


We now present some examples satisfying both (I) and (II). 
Keeping $l=2$, we can extend the previous example to general even $n\ge 6$ by essentially building Cartesian products. A possible class of manifolds derived from this method that verifies Conjecture~\ref{C1} for $l=2$ is captured in the following example. 

\begin{example} \label{e2}
	Let $l=2, n-l=2t$ for $t\ge 2$. Then for any $\delta_1,\ldots,\delta_t\ne 0$ the manifold
		\[
	\MM=\left\{ \left(x_1,y_1,\ldots,x_t,y_t, \sum_{u=1}^{t} 
	x_u^2-y_u^2+\delta_u x_uy_u, \sum_{u=1}^{t} 
	x_u^2-y_u^2\right):\;\; x_i,y_i\in \R  \right\},
	\]
		satisfies (I) and (II). Hence the GBSP holds for any 
		decreasing $\Psi$ with property $\mathbf{P}$.
\end{example}

Indeed, the critical determinant $\det(\Lambda)$ 
in Example~\ref{e2} decomposes as a product of
$t$ determinants as in Example~\ref{Ex1}, which we found all 
to be non-zero,
so (II) holds. Since $n-l\ge 4$ condition (I) holds too. 
Clearly Example~\ref{e2} can be generalised in terms of parameter
ranges for the coefficients of the quadratic forms.
By Obstruction~1, the condition that $n-l$ is even
is necessary for (II) (unless $n-l=1$, the hypersurface case).
A similar argument proves Proposition~\ref{prop}.

%


\subsection{The case $l>2$}

Now let us consider $l>2$. Together with the restrictions 
\[
n-l\equiv 0\bmod 2,\qquad n-l\ge 3,\qquad n\ge 2l,
\]
from Section~\ref{reh} and Obstructions~1 and 2, the easiest example is $l=3, n=7$.

The following general criterion for (II), or good pairs, 
involving ``definite determinants''
essentially comes from specialising certain variables (second order derivatives of $g$).

\begin{lemma} \label{laura}
	Let $n> l\ge 1$ be integers. 
Assume there exists a symmetric $(n-l)\times (n-l)$ matrix $M_{l,n}$ 
with the entries 
\[
M_{l,n}(i,j)\in \{ 0, \pm z_1 , \ldots, \pm z_l \}, \qquad 1\le i,j\le n-l,
\]
for 
formal variables $z_v$, $1\le v\le l$, so that
$\det M_{l,n} \ne 0$ for any choice of real $z_v$ not all $0$.
Then $(l,n)$ is a good pair.
\end{lemma}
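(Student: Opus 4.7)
The plan is to turn the combinatorial/algebraic datum $M_{l,n}$ directly into the coordinate functions $g_u$ of a quadratic-form parametrisation, so that the matrix $\Lambda$ in condition (II) is, up to relabelling, $M_{l,n}$ itself.

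First, I would read off the coefficients: since each entry of $M_{l,n}$ lies in $\{0,\pm z_1,\ldots,\pm z_l\}$, we may write
\[
M_{l,n}(i,j)=\sum_{u=1}^l c_{i,j,u}\, z_u,\qquad c_{i,j,u}\in\{0,\pm 1\},
\]
where at most one $c_{i,j,u}$ is nonzero for each pair $(i,j)$ and, by symmetry of $M_{l,n}$, $c_{i,j,u}=c_{j,i,u}$ for all $i,j,u$.

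Next, I would define the candidate $g=(g_1,\ldots,g_l):\R^{n-l}\to\R^l$ by taking each $g_u$ to be the quadratic form with symmetric Hessian whose $(i,j)$-entry equals $c_{i,j,u}$; concretely,
\[
g_u(\xx)=\tfrac{1}{2}\sum_{i,j=1}^{n-l} c_{i,j,u}\,x_i x_j,\qquad 1\le u\le l.
\]
A direct computation then gives $\partial^2 g_u/\partial x_i\partial x_j\equiv c_{i,j,u}$ on all of $U=\R^{n-l}$, so these quadratic forms are globally $C^2$ (in fact $C^\infty$) as required by the setup of Section~\ref{sandr}.

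Finally, I would verify condition (II) by substituting into the definition of $\Lambda=\Lambda(g,\boldsymbol{s},\xx)$: for any choice of $\boldsymbol{s}=(s_1,\ldots,s_l)\ne\0$ and any $\xx\in\R^{n-l}$,
\[
\Lambda_{j,i}=\sum_{u=1}^l s_u\cdot\frac{\partial^2 g_u}{\partial x_i\partial x_j}(\xx)=\sum_{u=1}^l s_u\, c_{i,j,u}=M_{l,n}(i,j)\big|_{z_v=s_v}.
\]
Hence $\det\Lambda$ coincides with the specialisation of $\det M_{l,n}$ at $z_v=s_v$, which is nonzero by hypothesis whenever $\boldsymbol{s}\ne\0$. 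Therefore $\Lambda$ is regular for all $\xx\in U$ (the exceptional set $S_{\MM}$ in (II) can be taken to be empty), so $(l,n)$ is a good pair in the sense of the definition preceding Proposition~\ref{prop}.

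There is no real obstacle in this argument; it is a bookkeeping translation. The substantive content of the lemma lies in the hypothesis on $M_{l,n}$ itself, namely finding symmetric matrices with entries among $\{0,\pm z_v\}$ whose determinant is a nowhere-vanishing form on $\R^l\setminus\{\0\}$. Constructing such $M_{l,n}$ for the pairs covered by Conjecture~\ref{C1} is the genuinely delicate task, and is taken up in the examples that follow.
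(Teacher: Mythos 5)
Your proof is correct and takes essentially the same approach as the paper: translate the matrix $M_{l,n}$ into quadratic forms $g_u$ whose constant Hessian entries $\partial^2 g_u/\partial x_i\partial x_j$ are prescribed by the coefficients $c_{i,j,u}\in\{0,\pm1\}$ of $z_u$ in the $(i,j)$-entry of $M_{l,n}$, so that $\Lambda(g,\boldsymbol{s},\xx)$ is exactly the specialisation of $M_{l,n}$ at $z_v=s_v$. The paper carries an extra degree of freedom by replacing $z_v$ with arbitrary linearly independent linear forms $L_v\cdot\boldsymbol{s}$, but it remarks explicitly that the canonical choice $L_v=\ee_v$ you make suffices; your bookkeeping via the symmetric coefficient array $c_{i,j,u}$ is if anything cleaner, as it handles the $\pm$ signs precisely.
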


Hence the existence of $M_{l,n}$ as in the lemma implies that
there exist $(n-l)$-dimensional submanifolds of $\R^n$, 
defined as in \eqref{eq:via} via $g=(g_1,\ldots,g_{l})$ with $g_j(\xx)$ real quadratic forms, that satisfy (II) on $U=\R^{n-l}$.	
Again, the hypothesis of Lemma~\ref{laura} forces $n\ge 2l$
and $n-l$ to be even.

\begin{remark}
	More generally, instead of $M_{l,n}(i,j)=\pm z_v$ or $0$, 
	we may take the matrix entries arbitrary
	linear combinations of the $z_v$, for the same conclusion.
	This generalised condition is in fact equivalent to (II).
	The proof is essentially the same as below. 
\end{remark}


\begin{proof}[Proof of Lemma~\ref{laura}] 
	Choose any set of $l$ linearly independent (over $\R$) 
	vectors $L_v=(a_{1,v},\ldots,a_{l,v})$, $1\le v\le l$, in $\R^l$ and define the
	linear forms in $l$ variables $s_1,\ldots,s_l$
	\[
	L_{v}\cdot \boldsymbol{s}= a_{1,v}s_1+\cdots + a_{l,v}s_l, \quad 1\leq v \le l.
	\] 
	Given $M_{l,n}$ as in the lemma, we construct a matrix
	$M_{l,n}^{\prime}$ with entries in formal variables $s_1,\ldots,s_l$ as follows:
	We identify $z_v$ with $L_v\cdot \boldsymbol{s}$, that is,  if for
	a pair $(i,j)$ the index $v=v(i,j)$ is so that
	$M_{l,n}(i,j)=\pm z_v$,
	then we choose the entry at position $(i,j)$
	of $M_{l,n}^{\prime}$  
	equal to $M_{l,n}^{\prime}(i,j)=L_v\cdot \boldsymbol{s}$. Else if $M_{l,n}(i,j)=0$ then keep  the value $M_{l,n}^{\prime}(i,j)=0$. Then $M_{l,n}^{\prime}$ is well-defined and depends on $s_1,\ldots,s_l$.
	By linear independence of the $L_v$ and the hypothesis of the lemma, 
	for any non-trivial choice of real numbers $s_1, \ldots,s_l$, the 
	matrix $M_{l,n}^{\prime}$ has non-zero determinant. 
	We finally
	notice that there is a one-to-one correspondence between
	any such collection of coefficient vectors $L_{v}$ 
	and a collection of quadratic forms
	$g_1,\ldots, g_l$
	via identifying for $v=v(i,j)$ as above
	$a_{u,v}=\partial^2 g_u/\partial x_i\partial x_j$ for $1\le u\le l$. In other words,
		$a_{u,v}x_ix_j$ if $i\ne j$ and $(a_{u,v}/2) x_i^2$ if $i=j$
	is the term containing $x_ix_j$ resp. $x_i^2$ in the quadratic form $g_u(\xx)=\sum_{i,j} a_{u,v}x_ix_j$. Hence (II) holds for the globally defined function $g=(g_1,\ldots,g_l)$, so $(l,n)$ is a good pair.
%
%
\end{proof}

In the proof, we may just let $L_v$ the canonical base vectors in $\R^l$ for $1\le v\le l$ so that 
essentially $z_v$ equals $s_v$, however the proof contains more information.

Hence the problem (II) can be relaxed 
to finding suitable matrices $M_{l,n}$
as in the lemma.
For $l=2, n=4$ we can take the matrix
\[
M_{2,4}=\begin{pmatrix}
z_1 & z_2 \\
z_2 & -z_1
\end{pmatrix}
\]
which leads to Example~\ref{Ex1}. More generally, 
for $l=2$ and $n=2v$ with
$v\ge 2$ an even integer, we may take a matrix consisting of $v-1$ copies of $2\times 2$ ``diagonal'' blocks as above and zeros elsewhere, similar to Example~\ref{e2} and Proposition~\ref{prop}.

For $l=3, n=7$ we can take 
\begin{equation} \label{eq:37}
M_{3,7}=\begin{pmatrix}
z_1 & z_2 & z_3 & -z_{3} \\
z_2 & -z_1 & z_3 & z_3   \\
z_3 & z_3 & z_1 & z_2  \\
-z_3 & z_3 & z_2 & -z_1
\end{pmatrix}
\end{equation}
which has determinant $\det(M_{3,7})= (z_1^2+z_2^2)^2 + 4z_3^4$.
Our construction leads to the following example.


\begin{example}
Let $l=3, n=7$.
Taking the linear forms $L_{v}$ from the proof 
of Lemma~\ref{laura}
the canonical base vectors $(1,0,0), (0,1,0), (0,0,1)$ 
of $\R^3$ and inserting for the $z_v$ 
in $M_{3,7}$ from \eqref{eq:37}, 
leads to $g=(g_1,g_2,g_3)$ with quadratic form 
entries $g_u(\xx)=g_u(x_1,x_2,x_3,x_4)$ given by
\begin{align*}
&g_1(\xx)=\frac{x_1^2-x_2^2+x_3^2-x_4^2}{2}, \\ 
&g_2(\xx)= x_1x_2+x_3x_4,\\
 &g_3(\xx)= x_1x_3+x_2x_3+x_2x_4-x_1x_4.
\end{align*}
So, the manifold becomes
\[
\mathcal{M}= \{ (x_1,x_2,x_3,x_4, g_1(\xx), g_2(\xx), g_3(\xx): x_i\in\R \},
\]
which satisfies (I) and (II),  and thus GBSP for
any decreasing $\Psi$ with property $\mathbf{P}$. 
\end{example}

For $l=3$ we may take any $n\in\{7,11,15,19,\ldots\}$
again by repeating this $4\times 4$-block matrix $M_{3,7}$ 
along the ``diagonal''.
It is unclear if $l=3$ and $n\in\{ 9, 13, 17, \ldots \}$ can be achieved. It would suffice to verify this for $n=9$ to
infer the claim for all $n$ in the list by considering 
matrices decomposing into two types of diagonal blocks,
$M_{3,7}$ and the vacant $M_{3,9}$. This
would confirm Conjecture~\ref{C1} for $l=3$ as well. Unfortunately, we 
are unable to find a suitable matrix $M_{3,9}$.

The above discussion on Lemma~\ref{laura} motivates the following problem implicitly stated within Conjecture~\ref{C1}.

\begin{problem}
	Given $l\ge 3$, what is the minimum $n$ so that a matrix $M_{l,n}$
	as in Lemma~\ref{laura} exists? Equivalently, given even $n-l$,
	what is the largest $l$ for which the hypothesis holds for some matrix.
\end{problem}

It is unclear if such $n$ exists at all if $l\ge 4$. 
As remarked in Section~\ref{condii},
in all examples of Section~\ref{examples}, we can manipulate the  
manifold by adding any functions with uniformly (in absolute value) small enough second order derivatives to the $l$ functionally dependent variables. In particular, for any analytic functions 
defined on a neighbourhood of $\0\in\R^{n-l}$ with quadratic terms as in our examples above, GBSP holds upon possibly shrinking the neighbourhood.


\section{Proof of Theorem \ref{thm1}} \label{proof}

Let us first clarify some notation. Recall
that by default all vectors
are row vectors, and we indicate with superscipt $t$ the transpose
(thus a column vector). In particular, for $g$ as above we consider
\[
g(\xx)=(g_{1}(\xx),\ldots,
g_{l}(\xx))\in \R^{l}, \qquad \xx\in U,
\]
as a row vector. Consequently the gradient $\nabla h$ of any multivariate
function $h$ with scalar output will be viewed as a line vector 
as well, and accordingly the columns of its Hesse matrix $\nabla^2 h$ 
are formed by the partial
derivatives of the corresponding fixed component function of the gradient $\nabla h$.

The proof is a refinement and extension of arguments presented in our previous paper \cite{HSS}. Here we only detail some modifications and other necessary details.

For any $\qq\in \Z^n$ and $p\in\Z$, analogously to~\cite{HSS} we define
\[
S(\qq,p) = S_{\Psi,\theta}(\qq,p) = \{\xx\in K : |\qq\cdot (\xx,g(\xx))^{t} - p - \theta| < \Psi(\qq)\},
\]
where $K$ is a compact subset of $U$. However, notice that 
our $g(\xx)$ is 
a vector here. Write 
$\qq = \tilde{q}\cdot(\rr,\boldsymbol{s})$ 
for 
\[
\tilde{q}=\max_{n-l+1\leq i\leq n}\vert q_{i}\vert
\]
and some $\rr\in\Q^{n-l}$, $\boldsymbol{s}\in\Q^{l}$
and let $a = a(\qq)= (p+\theta)/\tilde{q}$ and 
$\rho = \Psi(\qq)/|\tilde{q}|$ if $\tilde{q}\ne 0$ (assume this for now).
Then $\|\boldsymbol{s}\| =1$, and $\tilde{q} \le \|\qq\|$ by definition.
For any $\qq\in \Z^n$ and $p\in\Z$, we will bound the size of this set
which is equivalently given as
\[
S(\qq,p) = \{\xx\in K : |\rr\cdot\xx^{t} + \boldsymbol{s}\cdot g(\xx)^{t} - a| < \rho\}.
\]
Similarly as in~\cite{HSS}, for fixed $p$ and $\qq$,
define a function $h: \R^{n-l}\to \R$ by 
\[
h(\xx) = \rr\cdot\xx^{t} + \boldsymbol{s}\cdot g(\xx)^{t} - a,
\]
for
$\rr, \boldsymbol{s}, a$ induced by $\qq, p$ as above. 
Finally in case $\tilde{q}=0$, we instead let
\[
\rr=(q_1,\ldots,q_{n-l}), \qquad h(\xx)= \rr\cdot \xx^{t}  -  p - \theta, \qquad \rho=\Psi(\qq).
\]
As in~\cite{HSS} we see that in either case
\[
S(\qq,p)= \{ \xx\in K: |h(\xx)|<\rho  \}.
\]
We now identify $\nabla^2 h$ with the matrix $\Lambda$
of the theorem by the following calculation: If $\tilde{q}\ne 0$, writing $\boldsymbol{s}\cdot g(\xx)^t= s_1 g_1(\xx)+\cdots+s_lg_l(\xx)$,
for $1\le i\le n-l$, 
the $i$-th entry of the line vector 
$\nabla h(\xx)$ equals $r_i+ \boldsymbol{s}\cdot G_i(\xx)^t$ with 
\[
G_i(\xx)= (\partial g/\partial x_i)(\xx)= (\partial g_1/\partial x_i(\xx), \ldots, \partial g_l/\partial x_i(\xx)),
\]
and this entry being simply
$r_i$ if otherwise $\tilde{q}=0$. In the sequel we assume $\tilde{q}\ne 0$, else
similarly the argument is analogous to~\cite{HSS}. 
So this $i$-th entry of $\nabla h(\xx)$, denote it by $\nabla h(\xx)_i$, reads
\[
\nabla h(\xx)_i = r_i + \boldsymbol{s}\cdot \left(\frac{\partial g}{\partial x_i}(\xx)\right)^t= r_i + \sum_{u=1}^{l} s_u\cdot (\partial g_u/\partial x_i(\xx)), \qquad 1\le i\le n-l.
\]
Then
\[
\nabla h(\xx)- \sum_{i=1}^{n-l} r_i \ee_i= \sum_{i=1}^{n-l} \boldsymbol{s}\cdot \left(\frac{\partial g}{\partial x_i}(\xx)\right)^t\cdot \ee_i= \sum_{i=1}^{n-l}\sum_{u=1}^{l} s_u\cdot (\partial g_u/\partial x_i(\xx))\cdot \ee_i,
\]
with $\ee_i$ the canonical base vectors in $\R^{n-l}$. Hence the 
symmetric quadratic matrix $\nabla^2 h(\xx)$ with $j$-th column
$\partial\nabla h(\xx)/\partial x_j$ has entries
\begin{equation} \label{eq:NEUE}
\nabla^2 h(\xx)_{i,j}=  \sum_{u=1}^{l} s_u\cdot \frac{\partial^2g_u}{\partial x_i\partial x_j}(\xx), \qquad 1\le i,j\le n-l,
\end{equation}
(and vanishes if $\tilde{q}=0$).
Thus indeed we may identify $\nabla^2 h$ with $\Lambda$. 
In contrast to~\cite{HSS}, the matrix $\nabla^2 h(\xx)$ now also depends
on $\qq\in\Z^n$ via its dependence on the induced rational 
unit vector $\boldsymbol{s}$,
however by a compactness argument we will deal with this issue.

By assumption of (II), for any unit vector 
$\boldsymbol{s}\in\R^l$ and any $\xx\in K$,
the determinant of $\nabla^2 h(\xx)$ does not vanish.
For simplicity assume for the moment $\qq$ and thus $\boldsymbol{s}$
are fixed.
Then as in~\cite{HSS}, for some $\varepsilon>0$, the matrices
$\nabla^2 h(\xx)$ with $\xx\in K$ 
belong to a compact, convex set of matrices (identified with $\R^{(n-l)\times (n-l)}$)
with determinants at least $\varepsilon$. Now
the same argument as in~\cite{HSS} based on the mean value inequality gives the analogue of~\cite[Claim~2.3]{HSS}, which reads:

$\mathbf{Claim}$: If the norm of $\qq\in\Z^n$ is large enough, then
either there exists $\vv\in\R^{n-l}$ such that
\[
\Vert \nabla h(\xx)\Vert \asymp \Vert \xx-\vv\Vert, \qquad \xx\in K,
\]
or
\[
\Vert \nabla h(\xx)\Vert \asymp \Vert(\rr,\boldsymbol{s})\Vert
\asymp \Vert (\rr,1)\Vert, \qquad \xx\in K.
\]
Either way, we have $\Vert \nabla h(\xx)\Vert\ll 1$ for all $\xx\in K$.

We remark that the last claim is obvious by $\Vert \boldsymbol{s}\Vert=1$ and
\eqref{eq:NEUE}.
It should be pointed out that following the proof in~\cite{HSS}, the implied constants in the lemma will still depend on
$\qq$ in the form of the dependence on $\boldsymbol{s}$.
However, by the continuous dependence of $\nabla^2 h(\xx)$ on $\boldsymbol{s}$ and the compactness of the set $\{ \boldsymbol{s}\in\R^{l}: \Vert\boldsymbol{s}\Vert=1\}$, it is
easily seen that we can find uniform constants. 

The remainder of the proof of Theorem~\ref{thm1} works analogously
as in~\cite{HSS}, where we consider two cases 
and we replace $n$ by $n-l+1$ consistently and omit $\nabla \theta(\xx)$ and $\nabla^2 \theta(\xx)$, as we consider $\theta$ constant. It is worth noticing that in Case 1 we apply the analogue of~\cite[Lemma~2.4]{HSS}, which we again want to state explicitly for convenience:

\begin{lemma}[Hussain, Schleischitz, Simmons] \label{lemmacover}
	Assume $n,l$  are positive integers satisfying $n>l+1$.
	Let $\phi: U \subset \R^{n-l}\to \R$ be a $C^2$ function. Fix $\alpha>0$, $\delta>0$, and $\xx\in U$ such that $B_{n-l}(\xx,\alpha) \subset U$.
	There exists a constant $C > 0$ depending only on $n$ such that if
	\begin{equation}\label{ine}
	\|\nabla \phi(\xx)\| \geq C \alpha \sup_{\zz\in U} 
	\|\nabla^2 \phi(\zz)\|,
	\end{equation}
	then the set $$S(\phi,\delta) = \{\yy\in B_{n-l}(\xx,\alpha) : |\phi(\yy)| < \|\nabla \phi(\xx)\| \delta\}$$ can be covered by $\asymp (\alpha/\delta)^{n-l-1}$ balls of radius $\delta$.
\end{lemma}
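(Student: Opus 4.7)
The plan is to use hypothesis \eqref{ine} to argue that $\phi$ behaves essentially like an affine function on $B_{n-l}(\xx,\alpha)$, so that $S(\phi,\delta)$ lies in a thin slab perpendicular to $\nabla\phi(\xx)$ of thickness $O(\delta)$; such a slab is then covered by $\asymp(\alpha/\delta)^{n-l-1}$ balls of radius $\delta$ by standard volume packing.

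The first step is to exhibit near-constancy of the gradient on $B_{n-l}(\xx,\alpha)$. For any $\yy$ in this ball, the mean value inequality combined with \eqref{ine} yields
\[
\|\nabla \phi(\yy) - \nabla\phi(\xx)\| \;\leq\; \alpha\sup_{\zz\in U}\|\nabla^2\phi(\zz)\| \;\leq\; C^{-1}\|\nabla\phi(\xx)\|.
\]
Setting $\mathbf{e} = \nabla\phi(\xx)^t / \|\nabla\phi(\xx)\|$ and choosing $C$ sufficiently large (say $C \geq 4$), the directional derivative $\nabla\phi(\yy)\cdot \mathbf{e}^t$ stays in the interval $[(1-C^{-1})\|\nabla\phi(\xx)\|,\,(1+C^{-1})\|\nabla\phi(\xx)\|]$; in particular it is bounded below by $\|\nabla\phi(\xx)\|/2$ throughout the ball.

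Next I would fibre $B_{n-l}(\xx,\alpha)$ by lines parallel to $\mathbf{e}$. Along any such line, $\phi$ is strictly monotonic with derivative $\asymp \|\nabla\phi(\xx)\|$, so the one-dimensional mean value theorem shows that the sublevel set $\{t : |\phi(\yy_0 + t\mathbf{e})| < \|\nabla\phi(\xx)\|\delta\}$ is an interval of length $\asymp \delta$. Consequently, $S(\phi,\delta)\cap B_{n-l}(\xx,\alpha)$ is contained in a slab of thickness $O(\delta)$ around the level hypersurface $\{\phi=0\}$, contained in $B_{n-l}(\xx,\alpha)$.

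To finish, I would project this slab onto the hyperplane $H$ orthogonal to $\mathbf{e}$. The projection lies in an $(n-l-1)$-dimensional ball of radius $\alpha$, which admits a cover by $\asymp(\alpha/\delta)^{n-l-1}$ balls of radius $\delta$ by a standard packing estimate; pulling each such lower-dimensional ball back via the fibration gives a cylinder of dimensions $\delta\times\cdots\times\delta\times O(\delta)$, itself covered by $O(1)$ balls of radius $\delta$, yielding the desired cover of $S(\phi,\delta)$. The main technical point---more bookkeeping than genuine obstacle---is verifying that the implied constants only depend on the dimension $n-l$; since the entire argument rests on the dimensionless ratio $\alpha\sup_U\|\nabla^2\phi\|/\|\nabla\phi(\xx)\|\leq C^{-1}$, all estimates scale uniformly and $C$ can be chosen once and for all.
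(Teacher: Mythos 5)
Your proof is correct, and it is the standard slab/tube argument for such covering lemmas: near-constancy of $\nabla\phi$ on the ball (a direct consequence of \eqref{ine}) guarantees both that each line parallel to $\nabla\phi(\xx)$ meets $S(\phi,\delta)$ in an interval of length $\lesssim\delta$ and that these intervals drift by only $O(\delta)$ across a $\delta$-disc in the orthogonal hyperplane, so covering the projection of $B_{n-l}(\xx,\alpha)$ onto that hyperplane by $\asymp(\alpha/\delta)^{n-l-1}$ discs of radius $\delta$ and lifting gives the desired cover. The present paper does not itself reprove this lemma but cites it directly as Lemma~2.4 of \cite{HSS} (with $n$ replaced by $n-l+1$), where the same slicing argument appears.
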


Indeed, this is precisely~\cite[Lemma~2.4]{HSS}, upon replacing
$n$ by $n-l+1$. Now following the proof from~\cite{HSS} with the change of dimensions mentioned above, and 
taking into account our observation that the implied constants in the Claim above can be considered independent of $p, \qq$ (or $\boldsymbol{s}$), indeed this ultimately leads to the sufficient 
condition $s<2(n-l-1)$ in place of $s<2n-2$ from~\cite{HSS}.
This agrees with condition (I).

\providecommand{\bysame}{\leavevmode\hbox to3em{\hrulefill}\thinspace}
\providecommand{\MR}{\relax\ifhmode\unskip\space\fi MR }
\providecommand{\MRhref}[2]{%
  \href{http://www.ams.org/mathscinet-getitem?mr=#1}{#2}
}
\providecommand{\href}[2]{#2}

\end{document}